\newtheorem{theorem}{Theorem}[section]
\newtheorem{lemma}[theorem]{Lemma}
\newtheorem{proposition}[theorem]{Proposition}
\theoremstyle{definition}
\newtheorem{definition}[theorem]{Definition}
\theoremstyle{remark}
\numberwithin{equation}{section}
\title{EL-Shelling on Comodernistic Lattices}
\author{Tiansi Li}
\address{}
\curraddr{}
\email{}
\thanks{Department of Mathematics and Statistics, Washington University in St. Louis}
\keywords{}
\date{}
\dedicatory{}
\begin{document}

\begin{abstract}
We prove the equivalence of EL-shellability and the existence of recursive atom ordering independent of roots. We show that a comodernistic lattice, as defined by Schweig and Woodroofe, admits a recursive atom ordering independent of roots, therefore is EL-shellable. We also present and discuss a simpler EL-shelling on one of the most important classes of comodernistic lattice, the order congruence lattices.
\end{abstract}

\maketitle

\section{Introduction}

Modernistic and Comodernistic lattices are two large classes of finite lattices with shellable order complexes. Schweig and Woodroofe defined and studied these lattices in \cite{schweig2017broad} and showed that a wide range of lattices are either modernistic or comodernistic, including subgroup lattices of finite solvable groups, supersolvable and left-modular lattices, semi-modular lattices, $k$-equal partition lattices, order congruence lattices, and others. They proved in \cite{schweig2017broad} that comodernistic lattices are CL-shellable, which implies that the order complexes of modernistic and comodernistic lattices are shellable.

In this paper, we show that comodernistic lattices are EL-shellable, as defined in \cite{bjorner1983lexicographically}. This can be viewed as a generalization of Woodroofe's result on subgroup lattices. After Shareshian showed in \cite{shareshian2001shellability} that subgroup lattices of solvable groups are CL-shellable, Woodroofe proved a stronger result in \cite{woodroofe2008}  that these lattices are in fact EL-shellable.

It is shown in \cite{bjorner1983lexicographically} that a poset is CL-shellable if and only if it admits a recursive atom ordering. We will prove a similar result for EL-shellable posets in order to prove our main result.

Recall that a recursive atom ordering of $P$ assigns to each pair $(x,r)$ with $x$$\in$$P$$\backslash$$\{\hat{1}\}$ and a maximal chain $r$ in $[\hat{0}, x]$ an ordering of the atoms in $[x,\hat{1}]$

\begin{lemma}
If $P$ admits a recursive atom ordering such that, for each pair $(x,r)$, the atom ordering of $[x,\hat{1}]$ does not depend on $r$, then $P$ is EL-shellable.
\end{lemma}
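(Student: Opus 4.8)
The plan is to use the given recursive atom ordering to build an edge labeling of the Hasse diagram of $P$ and to verify that it is an EL-labeling. The guiding principle is that an edge labeling is the same thing as a chain-edge labeling whose value on a covering relation $x \lessdot y$ does not depend on the chosen root: for such a labeling the rooted interval $[u,v]_{r}$ carries exactly the same maximal chains, and the same label sequences, as the ordinary interval $[u,v]$, so the requirement defining an EL-labeling (a unique increasing maximal chain in each interval, and that chain lexicographically least) is identical to the requirement defining a CL-labeling. Hence it suffices to produce a chain-edge labeling that is a CL-labeling and whose value on each covering relation is independent of the root.

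Such a labeling is furnished by the correspondence of \cite{bjorner1983lexicographically} between recursive atom orderings and CL-labelings, which underlies the cited equivalence between CL-shellability and the existence of a recursive atom ordering. In that correspondence the label attached to a rooted edge $(x \lessdot y, r)$ is read off from the atom ordering of $[x,\hat1]$ that the recursion associates to the root $r$; conversely that atom ordering is recovered by sorting the atoms of $[x,\hat1]$ according to their labels. The root $r$ therefore influences the label on $x \lessdot y$ only through this atom ordering. By hypothesis the atom ordering of $[x,\hat1]$ depends on $x$ alone, so the labeling is constant on roots, and by the reduction of the previous paragraph it is an EL-labeling. This shows $P$ is EL-shellable.

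The step that requires care --- and which I expect to be the main obstacle --- is the assertion that the root enters the labeling \emph{only} through the atom orderings, since a literal reading of the construction in \cite{bjorner1983lexicographically} may assign label values by a recursion that inspects chains below $x$. To settle this cleanly I would argue by induction on the length of $P$: writing $\omega_x$ for the (now canonical) atom ordering of $[x,\hat1]$, I would define the labeling directly from the family $\{\omega_x\}$ and verify the two EL conditions in each interval $[u,v]$ from the two defining properties of a recursive atom ordering --- the property that atoms lying above earlier atoms are listed first, which yields uniqueness of the increasing chain, and the covering condition, which yields lexicographic minimality. The inductive step applies to each upper interval $[a,\hat1]$, and it goes through precisely because $\omega_{p}$ is determined by $p$ alone for every $p \ge a$; that is, root-independence is inherited on passing to upper intervals, so no root data ever needs to be carried. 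Verifying this inheritance, and with it the well-definedness of the labeling on edges rather than on rooted edges, is the crux of the argument.
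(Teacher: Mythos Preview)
Your reduction is sound and matches the paper's underlying idea: an edge labeling is precisely a root-independent chain-edge labeling, and for such a labeling the CL conditions collapse to the EL conditions. The divergence is in how the labeling is produced. You want to argue that the Bj\"orner--Wachs CL-labeling built from the given recursive atom ordering is already root-independent. The paper does not attempt this; instead it defines a bespoke edge labeling in which the label of $e=[x,y]$ is a tuple with one ``large'' coordinate for each maximal chain $r$ of $[\hat 0,x]$, that coordinate recording the pair $(r\cup e,\ \text{CL-label of }e\text{ along }r)$, together with a final coordinate $(x,y)$. A partial order on such tuples is then set up so that, for two consecutive edges, ``increasing'' in the new sense amounts to ``increasing along some root'' in the CL sense. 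The verification then uses exactly the observation you make at the end: root-independence of the $\omega_x$ forces the CL-increasing chain in $[u,v]_r$ to be the same for every $r$, whence it is the unique increasing and lex-first chain for the new edge labeling.

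The obstacle you flag is a genuine gap, and it is where your proposal stops short. In the Bj\"orner--Wachs inductive construction the labeling on $[a_j,\hat 1]$ is shifted so that atoms of $[a_j,\hat 1]$ covering an earlier $a_i$ receive labels below $\lambda(\hat 0,a_j)$ and the others receive labels above it; this shift depends on which atom $a_j$ the root passes through, so the integer eventually assigned to a higher edge can vary with the root even when every $\omega_x$ is fixed. Your fallback --- define $\lambda$ directly from the family $\{\omega_x\}$ and verify the EL axioms by induction on length --- is the right instinct, but you have not specified what $\lambda$ is, and that specification together with the two checks is the actual content of the lemma. The paper's tuple labeling is one explicit way to fill this in; if you prefer a leaner label poset you must supply the construction and carry out the uniqueness and lex-first verifications yourself.
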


The following is our main result.

\begin{theorem}
Comodernistic lattices are EL-shellable.
\end{theorem}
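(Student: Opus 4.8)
The plan is to produce, for an arbitrary comodernistic lattice $L$, a recursive atom ordering whose induced ordering on each interval $[x,\hat{1}]$ depends only on $x$ and not on the chosen root $r$; once this is in hand, the preceding lemma delivers EL-shellability immediately. Rather than extract such an ordering abstractly from the CL-shellability of Schweig--Woodroofe through the Bj\"orner--Wachs equivalence, where root-independence would be opaque, I would build the ordering by hand from the comodernistic structure, so that its dependence on $x$ alone is transparent. Two structural facts drive everything: first, every interval of a comodernistic lattice is again comodernistic, so the construction can be carried out recursively on upper intervals; and second, for a left-modular coatom $m$ of $[x,\hat{1}]$ and an atom $a$ of $[x,\hat{1}]$ with $a\not\le m$, left-modularity yields $a\wedge m=x$, $a\vee m=\hat{1}$, and an isomorphism $[a,\hat{1}]\cong[x,m]$ via $t\mapsto t\wedge m$, which lets me transport orderings between intervals.

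The construction proceeds by induction on the length of $L$. For each interval $[x,\hat{1}]$ I fix a left-modular coatom $m=m(x)$, guaranteed by the comodernistic hypothesis, and order the atoms of $[x,\hat{1}]$ according to the partition into those lying below $m$ and those joining with $m$ to $\hat{1}$, listing one block before the other and ordering within blocks by the recursively defined orderings. Because $m(x)$ and this rule refer only to the internal structure of $[x,\hat{1}]$, the resulting ordering is a function of $x$ alone; this is exactly the root-independence demanded by the lemma. What remains is to check that this intrinsic assignment is a genuine recursive atom ordering, that is, that it satisfies the two Bj\"orner--Wachs conditions relative to every root.

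The main obstacle is precisely this verification, and within it the initial-segment condition: for a later atom $\alpha_j$, the atoms of $[\alpha_j,\hat{1}]$ that lie above some earlier atom $\alpha_i$ must form an initial segment of the intrinsic ordering of $[\alpha_j,\hat{1}]$. This is the step where root-dependence would ordinarily enter, and it forces the coatom choices $m(\cdot)$ to cohere rather than be made freely: the block determined by $m(\alpha_j)$ must already isolate the atoms sitting above earlier atoms. I expect to establish this coherence from the inductive hypothesis together with the isomorphism $[a,\hat{1}]\cong[x,m]$ above, which identifies the atoms of an upper interval $[a,\hat{1}]$ (for $a\not\le m$) with the atoms of $[x,m]$ and thereby transfers both the coatom choice and the block ordering consistently from one interval to the next. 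The second Bj\"orner--Wachs condition should then follow from the exchange inequality furnished by $m$: if $\alpha_i,\alpha_j\le y$, left-modularity produces an atom $z$ of $[\alpha_j,\hat{1}]$ with $z\le y$ and $z$ above some earlier atom $\alpha_k$, which I would verify in parallel. Granting this coherent, root-independent recursive atom ordering, the theorem follows from the preceding lemma.
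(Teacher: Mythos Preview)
Your plan rests on the assertion that if $m$ is a left-modular coatom of $[x,\hat 1]$ and $a$ is an atom with $a\not\le m$, then $t\mapsto t\wedge m$ is an isomorphism $[a,\hat 1]\cong[x,m]$. This is false for left-modular (as opposed to modular) coatoms. In the pentagon $N_5$ with $\hat 0<a<b<\hat 1$ and $\hat 0<c<\hat 1$, the coatom $b$ is left-modular (so $N_5$ is comodernistic), the atom $c$ satisfies $c\not\le b$, yet $[c,\hat 1]$ has length~$1$ while $[\hat 0,b]$ has length~$2$. Left-modularity of $m$ does give you $(t\wedge m)\vee a=t$ for $t\in[a,\hat 1]$, so the map is injective, but it is not surjective and not a lattice isomorphism. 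Since your mechanism for transporting orderings between intervals---and hence for forcing the coherence of the choices $m(\cdot)$ needed in the initial-segment verification---is this isomorphism, the argument as written does not go through. There is also a smaller gap: you never say how to order the atoms \emph{within} the block $\{a:a\not\le m(x)\}$, and without the isomorphism there is no evident shorter interval to recurse into for that block.

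The paper avoids this by organizing the left-modular data differently. Instead of choosing a single coatom $m(x)$ of each $[x,\hat 1]$, it fixes once and for all a sub-$M$-chain $\mathbf m$ of $L$ and then, inductively on the \emph{height} of $y$, builds a sub-$M$-chain $\mathbf m_y$ of $[y,\hat 1]$ by splicing a sub-$M$-chain of $[y,w]$ onto the tail of $\mathbf m_z$ above $w$, where $z\lessdot y$ and $w$ is the least element of $\mathbf m_z$ above $y$. The atom ordering at $y$ is then ``label $[y,a]$ by the least index $i$ with $a\le (\mathbf m_y)_i$,'' which is manifestly a function of $y$ alone. The verification of the two recursive-atom-ordering conditions is then carried out by induction on coheight, using only Lemma~2.7 (that $m\wedge y\lessdot y$ for $y\not\le m$) rather than any interval isomorphism. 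If you want to salvage your top-down idea, you would need to replace the isomorphism with this covering property and rework the coherence argument accordingly; as it stands, the proposal has a genuine gap.
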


We will further study the class of order congruence lattices after proving Theorem 1.2. We will show that these lattices are EL-shellable using integer labels without invoking Theorem 1.2.

In the next section, we will include necessary definitions concerning modernistic and comodernistic lattices. For readers unfamiliar with EL-shellability and CL-shellability, we recommend \cite{bjorner1983lexicographically} for more information.

\section{Preliminaries}

We call a simplicial complex $\Delta$ $shellable$ if there is an ordering of its facets $F_1, F_2, \dots, F_t$ such that $(\bigcup_{i=1}^{k} F_i)\cap F_{k+1}$ is pure and $(dim(F_k)-1)$-dimensional for $k=1, 2, \dots, t-1$. A poset $P$ is $shellable$ if its order complex $\Delta P$ is.

For a poset $P$, an edge-labeling is a map from the edge set of the Hasse diagram of $P$ to some label poset $\Lambda$. An edge-labeling is an $EL-shelling$ if for every interval $[x, y]$ of $P$, there exists a unique weakly increasing maximal chain lexicographically preceding all other maximal chains in $[x, y]$ in a fixed linear extension of the poset $\Lambda$. A chain-edge labeling is a map from the set of all pairs $(c, e)$ to the label set $\Lambda$, where $c$ is a maximal chain of $P$ and $e$ is an edge in $c$, such that $(c, e)$ and $(c', e)$ get the same label if $c$ and $c'$ coincide from $\hat{0}$ to $e$. A poset $P$ is $CL-shellable$ if it admits a chain-edge labeling such that for every interval $[x, y]$ and maximal chain $r$ in $[\hat{0}, x]$, in the rooted interval $[x, y]_r$, there exists a unique weakly increasing maximal chain lexicographically preceding all other maximal chains in a fixed linear extension of the label poset $\Lambda$.

Clearly, EL-shellability implies CL-shellability. More importantly, both EL-shellability and CL-shellability imply shellabiliy.

\begin{theorem} \cite[Proposition 2.3]{bjorner1983lexicographically}
EL-shellability $\Rightarrow$ CL-shellability $\Rightarrow$ Shellability.
\end{theorem}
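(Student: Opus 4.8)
The plan is to establish the two implications separately, treating $P$ as a bounded poset whose order complex $\Delta P$ has the maximal chains of $P$ as its facets. The first implication, EL-shellability $\Rightarrow$ CL-shellability, is a formal reduction, while the second, CL-shellability $\Rightarrow$ shellability, carries all the real content and follows the lexicographic shelling argument of Bj\"orner and Wachs.

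For EL $\Rightarrow$ CL, I would simply exhibit every edge-labeling as a special chain-edge labeling. Given an EL-labeling $\lambda$ on the edges of the Hasse diagram, define a chain-edge labeling $\mu$ by $\mu(c,e) = \lambda(e)$, i.e. by forgetting the chain $c$. The consistency requirement is then automatic: if $c$ and $c'$ coincide from $\hat{0}$ up to $e$, then $\mu(c,e) = \lambda(e) = \mu(c',e)$ regardless. Since $\mu$ ignores the root $r$, every rooted interval $[x,y]_r$ inherits exactly the edge labels of $[x,y]$ under $\lambda$, so the unique weakly increasing, lexicographically first maximal chain guaranteed by the EL-shelling hypothesis on $[x,y]$ is also the unique such chain in $[x,y]_r$. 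Hence $\mu$ is a CL-labeling.

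For CL $\Rightarrow$ shellability, I would order the facets (maximal chains) of $\Delta P$ lexicographically by their label sequences, reading each chain $\hat{0} = x_0 \lessdot x_1 \lessdot \cdots \lessdot x_n = \hat{1}$ through its labels $\mu(m,[x_{i-1},x_i])$ in the fixed linear extension of $\Lambda$ and refining to a total order by breaking ties arbitrarily. The heart of the argument is to identify, for each maximal chain $m$, its restriction face $\mathcal{R}(m) = \{x_i : \mu(m,[x_{i-1},x_i]) > \mu(m,[x_i,x_{i+1}])\}$, the set of elements sitting at a descent of the label sequence, and to show that $\mathcal{R}(m)$ is the unique minimal face of $m$ not contained in any earlier facet. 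For any descent index $i$, the length-two segment $x_{i-1} \lessdot x_i \lessdot x_{i+1}$ is not lexicographically first in the rooted rank-two interval $[x_{i-1},x_{i+1}]_r$, so replacing it by the unique increasing chain there produces a lexicographically earlier maximal chain $m''$ agreeing with $m$ off of $x_i$; thus $m \setminus \{x_i\}$ lies in an earlier facet. Conversely, the uniqueness of the lexicographically least increasing chain in each rooted interval forces any face of $m$ missing some descent element to lie in an earlier facet, pinning down $\mathcal{R}(m)$ exactly. The consistency condition on $\mu$ is what makes these local replacements well defined, since altering a segment does not change the labels on the root beneath it.

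I expect the main obstacle to be this last verification: proving that the lexicographic order is genuinely a shelling order, i.e. that $\mathcal{R}(m)$ is precisely the minimal new face contributed by $m$. This demands careful use of the uniqueness of the increasing chain in every rooted interval together with an inductive local-replacement bookkeeping, and it is here that the rooted (chain-edge) formulation, rather than a plain edge-labeling, is essential. By contrast, the EL $\Rightarrow$ CL step is routine.
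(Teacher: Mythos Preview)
The paper does not prove this theorem at all; it is quoted as a background result from Bj\"orner and Wachs and used without argument. Your sketch is precisely the standard proof from that reference, so in that sense there is nothing to compare against and your overall plan is correct.

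That said, your ``conversely'' clause is stated in the wrong direction. You have already argued that for each descent $x_i$ the codimension-one face $m\setminus\{x_i\}$ lies in an earlier facet; it follows trivially that any face of $m$ missing some descent element lies in an earlier facet, so this is not a converse but a restatement. What actually remains, and what you have not said, is that the face $\mathcal{R}(m)$ itself is \emph{not} contained in any earlier facet---equivalently, that every earlier maximal chain $m'$ omits at least one descent element of $m$. This is where the uniqueness hypothesis is genuinely used: if $m'$ precedes $m$ and they first diverge above $x_d$ and rejoin at some $x_e$, then the segment of $m$ in the rooted interval $[x_d,x_e]_r$ is not lexicographically first, hence by the CL condition is not weakly increasing, hence has a descent at some $x_i$ with $d<i<e$, and this $x_i$ is not in $m'$. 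Without this direction you have not pinned down $\mathcal{R}(m)$ and have not verified the shelling condition.
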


Another notion commonly used in the context of lexicographic shellability is recursive atom ordering.

\begin{definition} \cite[Definition 4.2.1]{wachs2006poset}
A poset $P$ is said to admit a recursive atom ordering if the length of $P$ is $1$ or if the length of $P$ is greater than $1$ and there is an ordering $a_1, a_2,\dots ,a_t$ of the atoms of P which satisfies:
\begin{enumerate} 
    \item For all $j = 1,2,\dots,t$, $[a_j, \hat{1}]$ admits a recursive atom ordering in which the atoms of $[a_j, \hat{1}]$ that come first in the ordering are those that cover some $a_i$, where $i<j$.
    \item For all $i<j$, if $a_i, a_j < y$ then there is a $k<j$ and an atom $z$ of $a_j$ such that $z<y$ and $a_k<z$.
\end{enumerate}
\end{definition}

Bj\"oner and Wachs have proved that CL-shellability is equivalent to the existence of a recursive atom ordering.

\begin{theorem} \cite[Theorem 4.2.2]{wachs2006poset}
A bounded poset $P$ admits a recursive atom ordering if and only if $P$ is CL-shellable.
\end{theorem}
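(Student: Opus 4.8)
The statement is precisely the Bj\"orner--Wachs characterization, so the plan is to prove both implications by induction on the length of $P$, the base case (length $1$) being immediate: a single-edge poset is trivially CL-shellable and vacuously admits a recursive atom ordering.

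For the direction ``recursive atom ordering $\Rightarrow$ CL-shellable'', I would build a chain-edge labeling directly from the ordering. Fix a recursive atom ordering $a_1,\dots,a_t$ of $P$. By part (1) of Definition 2.2 each upper interval $[a_j,\hat{1}]$ again admits a recursive atom ordering, so by the inductive hypothesis it is CL-shellable; I would assemble these by declaring the label of an edge $x\lessdot y$ along a maximal chain $c$ with root $r$ to be the position of $y$ in the recursive atom ordering of $[x,\hat{1}]$ induced (by iterating (1)) from $r$. The compatibility axiom for a chain-edge labeling holds because this position depends only on the portion of $c$ weakly below $y$. The substance is then to verify the lexicographic condition on every rooted interval $[x,y]_r$: I would argue that the maximal chain obtained by repeatedly choosing the first available atom is the unique weakly increasing maximal chain and simultaneously the lexicographically smallest, running the verification by induction on the interval and using part (2) to exclude any competing increasing chain beginning at a later atom.

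For the converse ``CL-shellable $\Rightarrow$ recursive atom ordering'', I would start from a CL-labeling $\lambda$ and read off an atom ordering. Since the label that $\lambda$ assigns to a first edge $\hat{0}\lessdot a$ is independent of the chain, I would order the atoms $a_1,\dots,a_t$ according to those labels (equivalently, by the lexicographic order of the increasing maximal chains issuing from each atom), placing the start of the lexicographically least chain of $P$ first. Restricting $\lambda$ to each rooted interval $[a_j,\hat{1}]_{(\hat{0}\lessdot a_j)}$ gives a CL-labeling of $[a_j,\hat{1}]$, so by the inductive hypothesis $[a_j,\hat{1}]$ admits a recursive atom ordering; I would then refine the chosen ordering so that the atoms covering some earlier $a_i$ come first, which is exactly part (1), and finally verify part (2).

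The main obstacle in both directions is the verification of condition (2) of Definition 2.2 and its labeling counterpart: showing that whenever $a_i,a_j<y$ with $i<j$ there is an earlier atom $a_k$ and an atom $z$ of $[a_j,\hat{1}]$ below $y$ with $a_k<z$. I expect to extract this from the unique weakly increasing (hence lexicographically least) maximal chain of the interval $[\hat{0},y]$: its bottom edge lands on the smallest-indexed atom below $y$, and its second element supplies the required $z$, with the ``comes first'' structure of part (1) ensuring the indices line up. Marrying the recursive bookkeeping of roots with lexicographic minimality---so that the induced orderings on the various $[a_j,\hat{1}]$ are mutually consistent and conditions (1) and (2) hold simultaneously---is the delicate part of the argument.
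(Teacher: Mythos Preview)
The paper does not give its own proof of this statement: Theorem~2.3 is quoted verbatim from \cite[Theorem~4.2.2]{wachs2006poset} as background, with no argument supplied. So there is nothing in the paper to compare your proposal against.

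That said, your sketch is recognizably the standard Bj\"orner--Wachs argument. A couple of places where the outline is thinner than it needs to be: in the direction ``CL-shellable $\Rightarrow$ recursive atom ordering'', the step where you ``refine the chosen ordering so that the atoms covering some earlier $a_i$ come first'' is doing real work---one must argue that after this refinement the ordering on the atoms of $[a_j,\hat 1]$ is still a recursive atom ordering, i.e.\ that the inductive hypothesis applies to the \emph{refined} ordering rather than merely to some ordering. In the other direction, the claim that the greedy chain is the unique weakly increasing maximal chain in every rooted interval, and in particular that condition~(2) is exactly what forces any competing increasing chain to fail, deserves a careful induction rather than a gesture. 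These are the same checkpoints flagged in the original proof; your plan would go through once they are filled in, but as written it is a roadmap rather than a proof.
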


Next we introduce modernistic and comodernistic lattices. Let $L$ denote a lattice. Recall that an element $m$ in $L$ is $left-modular$ if for any $x<y$ in $L$, we have $(x\vee m)\wedge y = x\vee (m\wedge y)$. A lattice $L$ is $modernistic$ if for every interval of $L$, there exists a left-modular atom in that interval. A lattice is $comodernistic$ if it is the dual of a modernistic lattice. That is, there exists a left-modular coatom in every interval. Schweig and Woodroofe proved that comodernistic lattices are CL-shellable \cite{schweig2017broad}.

\begin{theorem} \cite[Theorem 1.2]{schweig2017broad}
If $L$ is a comodernistic lattice, then $L$ has a CL-labeling
\end{theorem}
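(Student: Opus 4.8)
The plan is to deduce the statement from the recursive-atom-ordering criterion. By Theorem 2.4 a bounded poset has a CL-labeling precisely when it admits a recursive atom ordering, so it suffices to produce such an ordering for a comodernistic lattice $L$. I would argue by induction on the length of $L$, the length-one case being immediate. Since every interval of a comodernistic lattice is again comodernistic, each interval $[a_j,\hat1]$ and each interval $[\hat0,m]$ that appears below is comodernistic of strictly smaller length, so the inductive hypothesis applies to all of them.

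The organizing object is a left-modular coatom $m$ of $L$, which exists by the definition of comodernistic. The first step is to record the structural consequence of left-modularity that drives the argument: if $a$ is an atom with $a\not\le m$ and $x\ge a$, then since $a\vee m=\hat1$ the identity $(a\vee m)\wedge x=a\vee(m\wedge x)$ collapses to $x=a\vee(x\wedge m)$, and a short argument with the same identity (comparing any $d$ with $x\wedge m\le d\le x$) shows $x\wedge m\lessdot x$. Thus $m$ cuts the part of $L$ lying over such an atom down into $[\hat0,m]$. I would then order the atoms of $L$ by listing those below $m$ first, in the order supplied by the recursive atom ordering of $[\hat0,m]$ provided by induction, followed by those not below $m$.

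Next I would verify the two axioms of Definition 2.3. Axiom (2) goes through cleanly. Given $i<j$ with $a_i,a_j<y$, I split on whether $a_j\le m$. If $a_j\le m$, then since early atoms precede late ones we also have $a_i\le m$, so $a_i,a_j\le y\wedge m$, and the required covering atom is produced inside $[\hat0,m]$ by applying axiom (2) of its recursive atom ordering to $y\wedge m$. If $a_j\not\le m$, I take any atom $z$ of $[a_j,\hat1]$ with $z\le y$ (the first step of a maximal chain in $[a_j,y]$); the structural lemma gives $z=a_j\vee(z\wedge m)$, whence $z\wedge m>\hat0$, so choosing any atom $b\le z\wedge m$ yields a below-$m$ (hence earlier) atom with $b\le z\le y$. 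Axiom (1) is where the work concentrates: each $[a_j,\hat1]$ is comodernistic, so by induction it admits a recursive atom ordering, and the issue is to choose that ordering so that the atoms covering some earlier $a_i$ come first. When $a_j\le m$ the coatom $m$ is still left-modular in $[a_j,\hat1]$ and the earlier-covering atoms lie among the below-$m$ atoms there; when $a_j\not\le m$ the structural lemma again places every atom of $[a_j,\hat1]$ over a below-$m$ atom, pinning down which atoms must be made to come first.

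The hard part will be making this recursion internally consistent for axiom (1): the atom ordering chosen on an upper interval must be allowed to depend on which atoms were already used below it, that is, on the root. This dependence is forced because left-modularity is a one-sided condition, so the identity $(x\vee m)\wedge y=x\vee(m\wedge y)$ is available only ``from the left,'' and a meet such as $(a_j\vee b)\wedge m$ that one would like to equal $b$ need not do so without full modularity. Accommodating precisely this root-dependence is what a chain-edge labeling permits and an edge labeling does not, which is exactly why the conclusion here is a CL-labeling rather than an EL-labeling; removing the root-dependence, and thereby strengthening the conclusion, is what the remainder of the paper undertakes.
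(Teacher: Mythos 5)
First, a point of comparison: the paper does not prove this statement at all --- it is quoted from Schweig and Woodroofe, whose own proof builds a CL-labeling directly. For each rooted interval they choose a sub-M-chain (Definition 2.6), constructed greedily from left-modular coatoms, and label edges by position along it; Lemmas 2.7 and 2.8 are exactly the supporting facts. Your plan of instead routing through Theorem 2.4 (recursive atom ordering $\Leftrightarrow$ CL-shellability) is legitimate in principle, and your verification of axiom (2) of Definition 2.3 is essentially correct in both cases: the split on $a_j\le m$ versus $a_j\nleq m$, the reduction to $[\hat 0,m]$ via $y\wedge m$, and the use of $z=a_j\vee(z\wedge m)$ to produce an early atom below $z$ all work.

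The genuine gap is axiom (1), which you flag as ``where the work concentrates'' but never close, and it cannot be closed with the inductive hypothesis as you state it. Induction on length gives only that \emph{some} recursive atom ordering of $[a_j,\hat 1]$ exists; axiom (1) demands one in which precisely the atoms covering an earlier $a_i$ come first, and bare existence gives no control over the initial segment. Your ``below-$m$ first'' rule is the wrong granularity: when $a_j\le m$, every earlier-covering atom of $[a_j,\hat 1]$ equals $a_i\vee a_j\le m$, so the required-first set does sit below $m$, but $[a_j,m]$ will in general also contain atoms of $[a_j,\hat 1]$ covering no earlier atom, and your recursion says nothing about the order \emph{within} the below-$m$ block. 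When $a_j\nleq m$ the situation is worse: Lemma 2.8 gives $z\wedge m\lessdot z$, but $z\wedge m$ need not be an atom, so the structural lemma does not even identify which atoms of $[a_j,\hat 1]$ cover an earlier $a_i$. To repair this you must strengthen what is carried through the induction --- e.g., induct on a concrete labeling built from sub-M-chains, re-chosen per root, which is exactly what Schweig and Woodroofe's direct CL-labeling does and what Section 4 of this paper does (coherently across roots) to get the stronger EL statement. Your closing observation about root-dependence correctly explains why the target here is CL rather than EL, but it describes the freedom a chain-edge labeling affords; it does not substitute for verifying axiom (1).
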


As stated above in Theorem 1.2, we will show that comodernistic lattices are EL-shellable. To show that comodernistic lattices are EL-shellable, we need the notion of a sub-M-chain, which can be viewed as an analogy of an M-chain in a left-modular lattice.

\begin{definition} \cite{schweig2017broad}
A maximal chain $\hat{0} = m_0\lessdot m_1\lessdot \dots \lessdot m_n =\hat{1}$ in $L$ is a sub-M-chain if for every $i$, the element $m_i$ is left-modular in the interval $[\hat{0}, m_{i+1}]$. 
\end{definition}

We also list here two lemmas from \cite{schweig2017broad} that will help prove our main theorem. We refer readers to \cite{schweig2017broad} for the proofs of the lemmas.

\begin{lemma} \cite[Lemma 3.1]{schweig2017broad}
Let $L$ be a lattice with a sub-M-chain $\textbf{m}$ of length $n$. Then no chain of $L$ has length greater than $n$.
\end{lemma}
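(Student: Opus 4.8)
The plan is to induct on the length $n$ of the sub-M-chain $\textbf{m}\colon \hat{0} = m_0 \lessdot m_1 \lessdot \cdots \lessdot m_n = \hat{1}$. The base case $n = 1$ is immediate, since then $\hat{0} \lessdot \hat{1}$ forces $L = \{\hat{0}, \hat{1}\}$, whose chains have length at most $1$. For the inductive step I would single out the coatom $m := m_{n-1}$, which by the definition of a sub-M-chain is left-modular in $[\hat{0}, m_n] = L$, hence left-modular throughout $L$. Simultaneously, the truncation $\hat{0} = m_0 \lessdot \cdots \lessdot m_{n-1}$ is a sub-M-chain of $[\hat{0}, m_{n-1}]$ of length $n-1$: for $i \le n-2$ the relevant interval $[\hat{0}, m_{i+1}]$ is unchanged by passing to $[\hat{0}, m_{n-1}]$, and the needed left-modularity is inherited from $\textbf{m}$. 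Thus the induction hypothesis applies and no chain of $[\hat{0}, m_{n-1}]$ has length greater than $n-1$.

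Next I would take an arbitrary chain $\hat{0} = c_0 < c_1 < \cdots < c_k = \hat{1}$ in $L$ and aim to show $k \le n$. The idea is to project it into $[\hat{0}, m]$ by the weakly increasing sequence $\hat{0} = c_0 \wedge m \le c_1 \wedge m \le \cdots \le c_k \wedge m = m$. Because $m$ is a coatom, the companion join sequence $c_j \vee m$ takes only the values $m$ and $\hat{1}$ and jumps from $m$ to $\hat{1}$ at exactly one index, which I will call the \emph{crossing step}. The crux is to show that at every non-crossing step the meet sequence increases strictly, i.e. $c_{j-1} \wedge m < c_j \wedge m$. Granting this, the projected sequence (after deleting repetitions) forms a chain of length at least $k-1$ inside $[\hat{0}, m]$, so $k-1 \le n-1$ by the induction hypothesis, whence $k \le n$.

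To establish the strictness claim I would split the non-crossing steps into two cases. When $c_{j-1}, c_j \le m$ we simply have $c_{j-1} \wedge m = c_{j-1} < c_j = c_j \wedge m$. The substantive case is $c_{j-1}, c_j \not\le m$, where $c_{j-1} \vee m = c_j \vee m = \hat{1}$ and left-modularity is indispensable: applying it to $c_{j-1} < c_j$ gives $(c_{j-1} \vee m) \wedge c_j = c_{j-1} \vee (m \wedge c_j)$, and since the left side is $\hat{1} \wedge c_j = c_j$, we get $c_j = c_{j-1} \vee (m \wedge c_j)$. If the meet failed to grow, that is $m \wedge c_{j-1} = m \wedge c_j$, the right side would collapse to $c_{j-1} \vee (m \wedge c_{j-1}) = c_{j-1}$, forcing $c_j = c_{j-1}$, a contradiction.

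I expect this final computation to be the main obstacle: it is the only place the hypothesis is used, and it requires selecting the right instance of the modular law to convert a strict increase of the original chain into a strict increase of its meet projection along the steps that sit above $m$. The surrounding bookkeeping, namely that exactly one crossing step occurs and that the truncated chain remains a sub-M-chain, is routine.
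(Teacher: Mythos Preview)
The paper does not actually prove this lemma; it cites \cite[Lemma~3.1]{schweig2017broad} and explicitly refers the reader there, so there is no in-paper argument to compare against. Your proof is correct and is essentially the standard one: induct on $n$, use that the coatom $m = m_{n-1}$ is left-modular in all of $L$ by the definition of sub-M-chain, and project an arbitrary chain down into $[\hat{0}, m]$ via meets, with left-modularity forcing strictness at every step except the single one where the join sequence $c_j \vee m$ jumps from $m$ to $\hat{1}$. The key computation in the case $c_{j-1}, c_j \not\le m$ is exactly the right instance of the modular identity.

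One minor omission: you silently restrict to chains of the form $\hat{0} = c_0 < \cdots < c_k = \hat{1}$. This is harmless since every chain extends to such a chain, but it should be said. Everything else, including the observation that the truncated chain $m_0 \lessdot \cdots \lessdot m_{n-1}$ is again a sub-M-chain of $[\hat{0}, m_{n-1}]$, is routine as you note.
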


\begin{lemma} \cite[Lemma 2.12]{schweig2017broad}
Let $m$ be a coatom of the lattice $L$. Then $m$ is left-modular in $L$ if and only if for every $y$ such that $y\nleq m$ we have $m\wedge y\lessdot y$.
\end{lemma}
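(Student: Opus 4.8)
The plan is to exploit the fact that $m$ is a coatom to drastically simplify the defining equation of left-modularity. Since the only elements above $m$ are $m$ itself and $\hat{1}$, for any $x$ we have $x \vee m = m$ when $x \leq m$ and $x \vee m = \hat{1}$ when $x \nleq m$. First I would observe that the left-modular identity $(x \vee m)\wedge y = x \vee (m \wedge y)$ holds automatically whenever $x \leq m$: in that case the left side is $m \wedge y$, while $x \leq m$ and $x \leq y$ force $x \leq m \wedge y$, so the right side is also $m \wedge y$. Thus $m$ is left-modular precisely when the identity holds for all $x < y$ with $x \nleq m$, and for such a pair the left side collapses to $\hat{1}\wedge y = y$. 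Hence the whole statement reduces to the equivalence between the covering condition and the assertion that $x \vee (m \wedge y) = y$ for every $x < y$ with $x \nleq m$.

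For the forward direction, I would assume left-modularity and fix $y$ with $y \nleq m$, so that $m \wedge y < y$. To show $m \wedge y \lessdot y$, I would take any $z$ with $m \wedge y \leq z < y$ and argue that $z = m \wedge y$. If $z \nleq m$, then applying the reduced identity to the pair $z < y$ gives $z \vee (m \wedge y) = y$; but $z \geq m \wedge y$ makes the left side equal to $z$, forcing $z = y$, a contradiction. Therefore $z \leq m$, and together with $z \leq y$ this yields $z \leq m \wedge y$, hence $z = m \wedge y$. This leaves no element strictly between $m \wedge y$ and $y$.

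For the converse, I would assume the covering condition and verify the reduced identity. Given $x < y$ with $x \nleq m$, note that $y \nleq m$ as well (otherwise $x \leq y \leq m$), so by hypothesis $m \wedge y \lessdot y$. The element $x \vee (m \wedge y)$ lies between $m \wedge y$ and $y$, so the covering relation forces it to equal one of the two endpoints. It cannot equal $m \wedge y$, since that would give $x \leq m \wedge y \leq m$, contradicting $x \nleq m$; hence $x \vee (m \wedge y) = y$, as required.

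The argument is essentially a careful case analysis, and I do not anticipate a serious obstacle. The one point demanding attention is the reduction step itself, namely recognizing that only the case $x \nleq m$ carries content and that the coatom hypothesis lets the join $x \vee m$ be replaced by $\hat{1}$ there; once this is in place, both implications follow from the covering relation in an essentially symmetric way.
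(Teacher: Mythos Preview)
Your argument is correct. The reduction step is clean: for a coatom $m$ the join $x\vee m$ can only be $m$ or $\hat 1$, and the case $x\le m$ is indeed automatic since then $x\le m\wedge y$. Both implications then follow exactly as you outline, with no hidden subtleties.

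As for comparison: the present paper does not actually prove this lemma. It is quoted from \cite{schweig2017broad} (as Lemma~2.12 there), and the paper explicitly refers the reader to that source for the proof. So there is no in-paper argument to compare against. Your proof is the standard one and would serve perfectly well as a self-contained justification.
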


\section{Proof of Lemma 1.1}

Let $P$ be a CL-shellable poset with an induced recursive atom ordering such that for each $x$$\in$$P$$\backslash$$\{\hat{1}\}$, all orderings of the atoms of $[x,\hat{1}]$ are the same, as in Lemma 1.1. We will construct an EL-labeling for $P$. For every edge $e=[x, y]$ in the Hasse diagram of $P$, we define a $(\kappa_e+1)$-tuple, where $\kappa_e$ is the number of maximal chains in $[\hat{0}, x]$. The last coordinate records the edge itself. That is, for $e=[x, y]$, let the last coordinate of the label of $e$ simply be the 2-tuple (x, y). Now we consider the first $\kappa_e$ coordinates. These are indexed by the roots in $[x,y]$. Call these the $\kappa_e$ large coordinates of the label. In each large coordinate, we place a $2$-tuple, whose first entry is the union of the corresponding root and the edge itself, and the second entry is the label induced by the root in the CL-shelling of $P$. Let us call these $2$-tuples the small coordinates. We order the $\kappa_e$ large coordinates according to the original CL-labeling of $P$. That is, if $C$ is lexicographically the $k^{th}$ maximal chain in $[\hat{0}, x]$ according to the CL-labeling, the $k^{th}$ large coordinate of $e$ consists of $C\cup e$ and the label induced by $C$.

Now we define a partial order on the labeling set. Suppose $e=[x, y]$ and $e'=[x', y']$ are two edges in the Hasse diagram labeled as above. Then we say $e\leq e'$ if $y<x'$, or if $y=x'$ and for some large coordinate $e_m$ of $e$, there exists a large coordinate $e'_n$ of $e'$ such that $e_m \leq e'_n$, by which we mean that the root (first small coordinate) of $e_m$ is contained in the root of $e'_n$, and the label (second small coordinate) in $e_m$ is less than or equal to the label in $e'_n$.

Let us first check that this is a well-defined partial order. Obviously we have reflexivity. Antisymmetry is satisfied because if $e < e'$, $x$ cannot be above or equal $y'$. Transitivity holds because if $e < e'$ and $e' < e''$, $y<x''$. So this is indeed a partial order.

We now check that this edge-labeling gives an EL-shelling of $P$. For any interval $[x, y]$, let $C$ be the weakly-increasing chain in the original CL-labeling (with respect to any roots). We claim that the new edge-labeling on $P$ makes $C$ the unique weakly increasing and lexicographic first maximal chain in the interval.

The fact that $C$ is weakly-increasing follows from the consistency of the CL-labeling. Let $C$ be weakly increasing in $[x,y]_r$ for all roots $r$. Suppose there exists some $C'=\{c_0 \lessdot c_1 \lessdot \dots \lessdot c_k\}$ in $[x,y]$ that is also weakly increasing in the new edge-labeling. Then for each $0<i<k$, there exists a root $r_i$ such that $c_i$ is the first atom in $[c_{i-1}, c_{i+1}]_{r_i}$. Notice that the assumption on the atom orderings implies that whether labels of two consecutive edges in $[c_{i-1}, c_{i+1}]$ are weakly increasing is independent of roots. Hence $C'$ must be weakly increasing in some rooted interval, which would further imply that there are two weakly increasing maximal chains in one rooted interval. This contradicts to $P$ being CL-shellable. A similar argument shows that $C$ must be the lexicographic first maximal chain of the interval. \qed

\bigskip

\section{Proof of Theorem 1.2}

Let $L$ be a comodernistic lattice. We fix a sub-M-chain $\textbf{m} = \{\hat{0} = m_0 \lessdot{m_1} \lessdot{\dots} \lessdot{m_n} = \hat{1}\}$ of $L$ and prove Theorem 1.2 by induction on height. For each atom $a$ of $L$, label $[\hat{0}, a]$ by the index of the minimal element of $\textbf{m}$ that is larger than or equal to $a$ in the partial order. Suppose we have labeled all elements of height less than $k$. For $y$ of height $k$, we pick an element $z$ covered by $y$ and define a sub-M-chain $\textbf{m}_y$ in $[y, \hat{1}]$ as follows. Let $w$ be the minimal element above $y$ in $\textbf{m}_z$, where $\textbf{m}_z$ has been chosen by induction, and pick some sub-M-chain of $[y, w]$. Then $\textbf{m}_y$ is the sub-M-chain of $[y, w]$ followed by the rest of the sub-M-chain of $\textbf{m}_z$. We label the atom edges of $[y, \hat{1}]$ using $\textbf{m}_y$ the same way we label atom edges of $L$ with $\textbf{m}$.

Now we show that the edge labeling defined above induces a recursive atom ordering independent of roots as in Lemma 1.1. We claim that any atom ordering that is consistent with the edge labeling is a recursive atom ordering. That is, if for a fixed linear extension, the label of $[x, y]$ precedes the label of $[x, y']$, where $y$ and $y'$ both cover $x$, then $y$ precedes $y'$ in the atom ordering of $x$. And if the label of $[x, y]$ equals the label of $[x, y']$, we can either have $y$ precedes $y'$ or $y'$ precedes $y$. We prove this claim by induction on coheight. It is obvious for all intervals $[x, \hat{1}]$ where $x$ is a coatom of $L$. Suppose the claim stands for all $[x, \hat{1}]$ of length less than $k$. For any $[x, \hat{1}]$ of length $k$, consider any atom $a_i$ of the interval. 

\begin{figure}
    \centering
    \includegraphics[width=1\columnwidth]{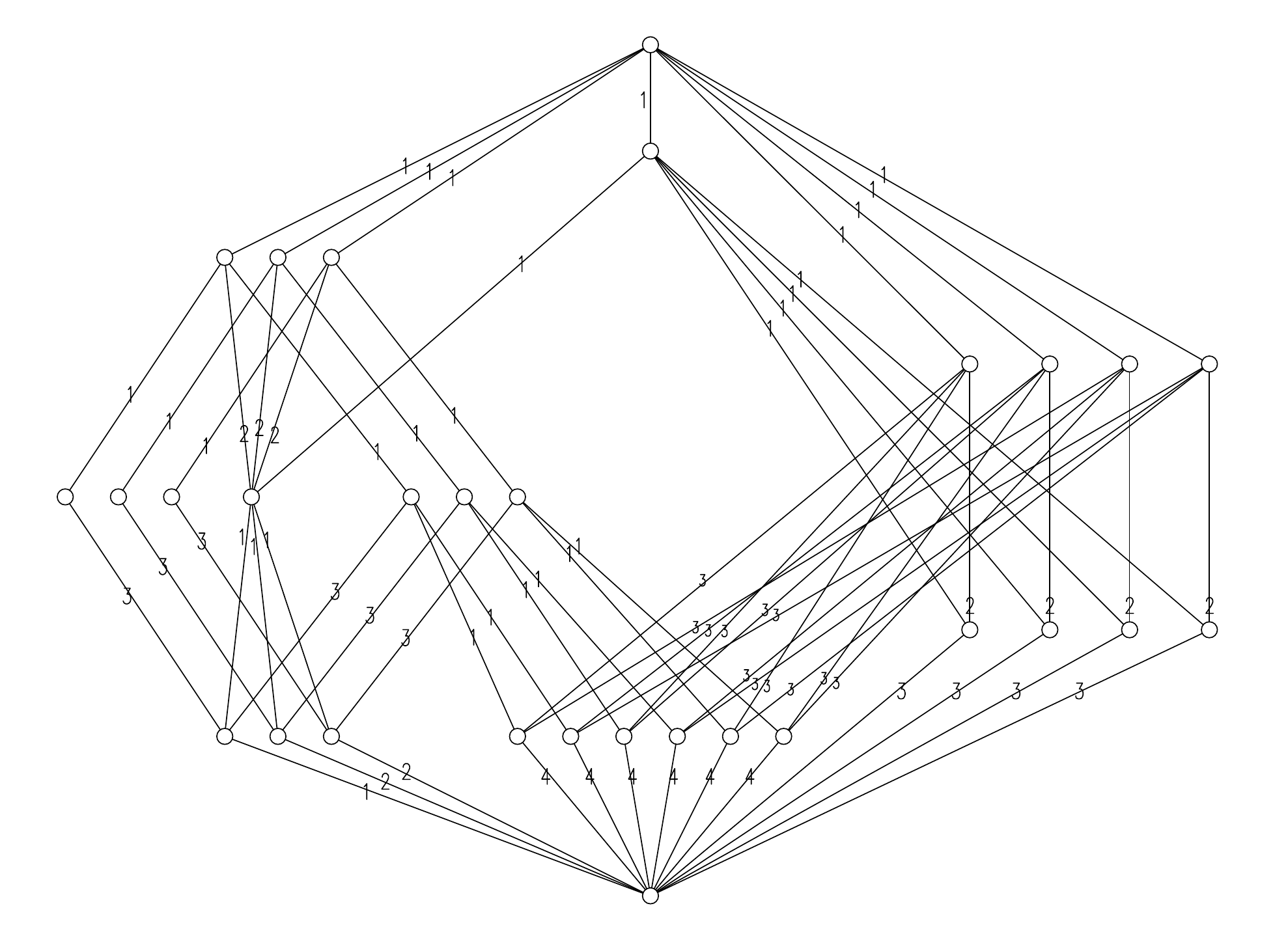}
    \caption{Initial labeling on $S_4$ given by sub-M-chain}
    \label{fig:my_label}
\end{figure}

\begin{figure}
    \centering
    \includegraphics[width=1\columnwidth]{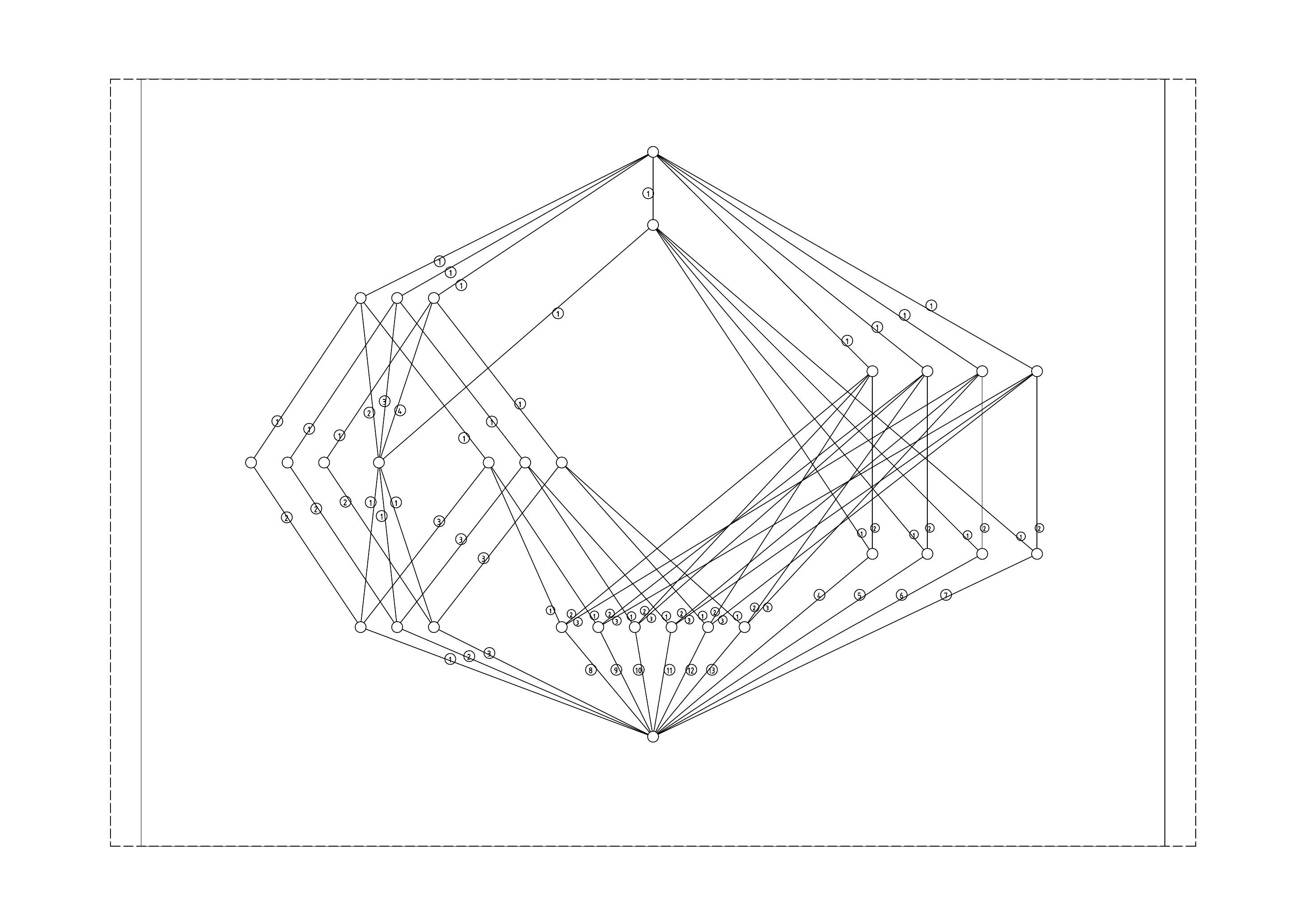}
    \caption{Root-independent recursive atom ordering from the initial labeling}
    \label{fig:my_label}
\end{figure}

$[a_i, \hat{1}]$ admits a recursive atom ordering by induction. For any two atoms $b_j$ and $b_k$ of $[a_i, \hat{1}]$, suppose $j<k$ and $b_k$ covers some $a_l$ with $l<i$. We need to show that $b_j$ covers some $a_m$ for $m<i$. Since $b_k$ covers $a_i$ and $a_l$ with $l<i$, $b_k \in [x, w]$ where $w$ is the minimal element above $a_i$ in the sub-M-chain $\textbf{m}_x$ of $[x, \hat{1}]$. Therefore $b_j < w$ since $j<k$. Let $w'$ be the maximal element in $\textbf{m}_x$ below $w$ and not above $b_j$. Then $b_j \wedge w'$ is an atom in $[x, \hat{1}]$ prior to $a_i$ and covered by $b_j$.

Now for any two atoms $a_i$ and $a_j$ with $i<j$ of $[x, \hat{1}]$, we need to find an atom of $[a_j, \hat{1}]$ that covers some $a_k < a_i \vee a_j$ with $k<j$. Consider the first atom $b$ in $[a_j, \hat{1}]$ that sits below $a_i \vee a_j$. Let $w$ be the minimal element of $\textbf{m}_x$ that sits above $a_i \vee a_j$. Then $w$ is the minimal element of $\textbf{m}_x$ that sits above $a_j$ since $i<j$. Let $w'$ be the element in $\textbf{m}_x$ covered by $w$. Then $b \wedge w'$ is an atom in $[x, \hat{1}]$ that is prior to $a_j$ \qed

\section{Order Congruence Lattices}

The order congruence lattice $\mathcal{O}(P)$ of a poset $P$ is the set of all equivalence classes of level set partitions from $P$ to $\mathbb{Z}$. That is, the set of all weakly order preserving maps from $P$ to $\mathbb{Z}$, where
two such maps are considered equivalent if they induce the same partition on $P$.

For example, the order congruence lattice on a totally ordered set is a boolean lattice. The order congruence lattice on a set of pairwise incomparable elements is isomorphic to a partition lattice. In general, order congruence lattice of any poset can be considered as in between the boolean lattice and the partition lattice.

Schweig and Woodroofe proved in \cite{schweig2017broad} that order congruence lattices are comodernistic, therefore CL-shellable. We here present a different proof where any linear extension of $P$ gives a sub-M-chain and an EL-shelling on $\mathcal{O}(P)$.

\begin{figure}
    \centering
    \includegraphics[width=1\columnwidth]{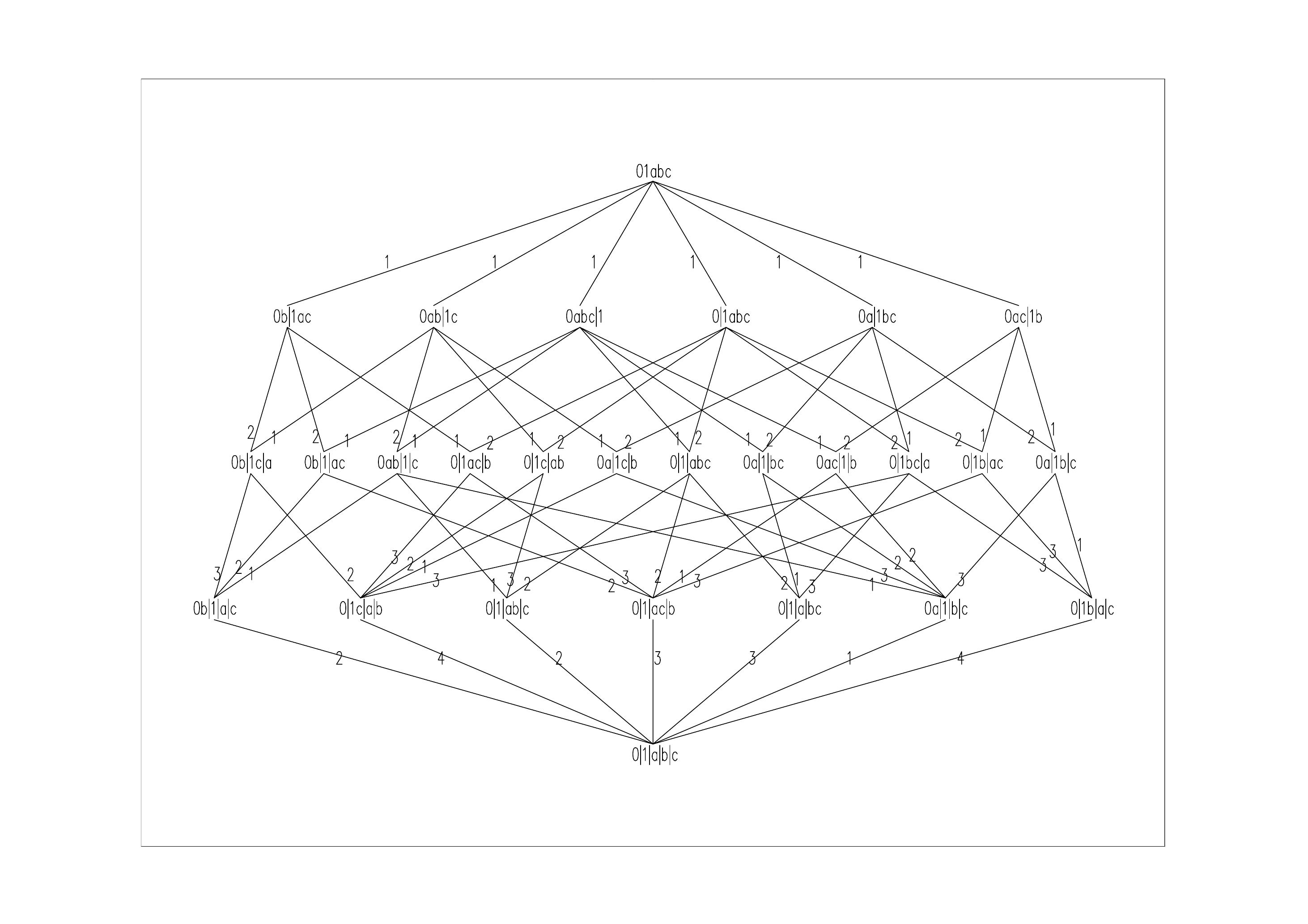}
    \caption{EL-shelling on $N_5$ with linear extension $\hat{0}\rightarrow a\rightarrow b\rightarrow c\rightarrow \hat{1}$}
    \label{fig:my_label}
\end{figure}

Fix a linear extension of $P=\{z_1, z_2, \dots, z_n\}$. For an element in $\mathcal{O}(P)$ with $k$ blocks, we can index each block as follows. First we take the smallest element in the linear extension of each block as representative. We assign indices $1$ through $k$ to the blocks preserving the order of their representatives. And we use these indices to construct a sub-M-chain and EL-shelling as follows.

Let $C$ be the maximal chain $\hat{0} = c_1\lessdot c_1\lessdot \dots \lessdot c_n =\hat{1}$ where $c_k$ is obtained by having the first $k$ elements ($z_1$ through $z_k$) in one block, and then one block each for the remaining elements.

\begin{proposition}
$C$ is a sub-M-chain which induces an EL-shelling on $\mathcal{O}(P)$.
\end{proposition}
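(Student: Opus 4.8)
The plan is to argue in two stages: first that $C$ is a sub-M-chain, and then that the induced integer edge-labeling is an EL-labeling, the latter established directly rather than through Theorem 1.2.

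For the sub-M-chain claim, write $D_i = \{z_1, \dots, z_i\}$, so that $c_i$ is the order congruence whose only nontrivial block is $D_i$. Since $z_1, \dots, z_n$ is a linear extension, each $D_i$ is a down-set of $P$, hence a convex block collapsing to a minimal element of the quotient, so each $c_i$ is a genuine element of $\mathcal O(P)$ and $c_i \lessdot c_{i+1}$. To see that $c_i$ is left-modular in $[\hat 0, c_{i+1}]$ I would invoke the coatom criterion of Lemma 2.7: since $c_i \lessdot c_{i+1}$, it suffices to check that $c_i \wedge y \lessdot y$ for every $y \le c_{i+1}$ with $y \not\le c_i$. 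Using that the meet in $\mathcal O(P)$ is the common refinement (the intersection of two order congruences is again an order congruence), I would observe that $y \not\le c_i$ forces $z_{i+1}$ to lie in a block $A$ of $y$ with $A \setminus \{z_{i+1}\} \subseteq D_i$ nonempty, and that $c_i \wedge y$ is obtained from $y$ by splitting off exactly $z_{i+1}$ from $A$. This is a single atomic refinement, so $c_i \wedge y \lessdot y$ in $\mathcal O(P)$, and Lemma 2.7 yields left-modularity for every $i$; hence $C$ is a sub-M-chain in the sense of Definition 2.5.

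For the labeling, to each cover $w \lessdot w'$ merging two blocks $B, B'$ of $w$ with $\min B < \min B'$ (minima taken in the linear extension) I assign the integer label $\lambda(w \lessdot w') = j$, where $z_j = \min B'$; that is, the label records the representative of the merged block with the larger representative. On $C$ this produces the labels $2, 3, \dots, n$, since the $i$-th step merges $D_i$ (representative $z_1$) with $\{z_{i+1}\}$. The key combinatorial fact I would prove is a label-multiset lemma: along any maximal chain of an interval $[x, z]$, each cover makes exactly one block representative lose its status as a block minimum, and distinct covers retire distinct representatives; consequently the multiset of labels of any maximal chain of $[x,z]$ equals the fixed set $S_{[x,z]}$ of positions of the $x$-block representatives that are not minimal in their $z$-block. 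In particular the labels are distinct, so a maximal chain is weakly increasing precisely when its label word lists $S_{[x,z]}$ in increasing order; this forces both uniqueness of the weakly increasing chain and its lexicographic minimality at once. A short induction (the ``staircase'' forcing: the step of label $j$ must merge the block currently represented by $z_j$ into the unique lower block accumulating the smaller representatives) then identifies this chain explicitly and, on $[\hat 0, \hat 1]$, recovers $C$.

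The main obstacle is establishing \emph{existence} of the weakly increasing chain in an arbitrary interval, i.e.\ that the forced staircase of merges is actually a chain of covers in $\mathcal O(P)$. I would handle this by the structural reduction that an interval $[x, z]$ is isomorphic to $[\hat 0, \bar z]$ in the order congruence lattice of the quotient poset $\bar P = P/x$ (equipped with the linear extension induced on block representatives), and that $[\hat 0, \bar z]$ factors as a direct product $\prod_j \mathcal O(Q_j)$ over the blocks $Q_j$ of $\bar z$. Under this reduction the restricted labeling matches, up to an order-preserving relabeling, the linear-extension labeling on each factor, and the required staircase merges are exactly the covers of the chains $C_{Q_j}$, whose admissibility was already verified by the down-set argument of the first stage. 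Verifying this product/quotient decomposition of intervals, together with the compatibility of the labelings under it, is the delicate point; once it is in place, the label-multiset lemma completes the proof that $\lambda$ is an EL-labeling of $\mathcal O(P)$ using only integer labels.
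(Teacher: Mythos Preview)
Your sub-M-chain argument is essentially the paper's: both isolate $z_{i+1}$ (the paper's $z_k$) from its block to exhibit the required cover $c_i\wedge y\lessdot y$, invoking Lemma~2.7.

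For the EL-labeling, however, you take a genuinely different route. The paper labels a cover that merges the $i$-th and $j$-th blocks of $w$ (with $i<j$, blocks reindexed $1,\dots,|w|$ at each level by their representatives) by the block index~$j$; in particular every edge of $C$ receives label~$2$. You instead label by the \emph{absolute} position in the fixed linear extension of the losing representative, giving $2,3,\dots,n$ along $C$. Your label-multiset lemma then shows that the labels along any maximal chain of $[x,z]$ form a fixed \emph{set} $S_{[x,z]}$, which dispatches uniqueness and lexicographic minimality of the increasing chain in one stroke; the paper handles these separately, first describing the greedy chain (repeatedly merge the two smallest-indexed sub-blocks lying in a common block of the top) and then arguing that any deviation, either within a block or among blocks, forces a later decrease. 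For existence you pass through the quotient/product decomposition of intervals, whereas the paper simply asserts the greedy chain directly. Your route buys a clean, conceptual uniqueness argument and an explicit explanation of why every maximal chain has the same label multiset, at the cost of verifying the interval decomposition and the compatibility of the labelings under it; the paper's route is more elementary and self-contained but leaves implicit the check that each greedy merge is a legitimate cover in $\mathcal O(P)$---precisely the point you flag as the main obstacle.
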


\begin{proof}
For any $k$ and $x\in [\hat{0}, c_k]$, we need to show either $x<c_{k-1}$ or $x\wedge c_{k-1}\lessdot x$. If $z_k$ in $x$ is a block by itself, $x<c_{k-1}$. Otherwise, $x\wedge c_{k-1}$ is obtained by isolating $z_k$ to a single block from $x$. Hence $x\wedge c_{k-1}$ is covered by $x$ and $C$ is a sub-M-chain.

Now we show that it induces an EL-shelling on $\mathcal{O}(P)$. Consider the following edge-labeling. For any edge $[x, y]$, if $y$ is obtained from $x$ by merging the $i^{th}$ block and the $j^{th}$ block where $i<j$, we assign $j$ to $[x, y]$. This is a well-defined edge-labeling. Now we show that it is an EL-shelling.

Consider any interval $[x, y]$ in $\mathcal{O}(P)$. Notice that if $y$ consists of $k$ blocks, then any edge in the interval can be viewed as a merge within one of the $k$ blocks of $y$. The lexicographically first maximal chain is obtained by consecutively merging the smallest indexed two sub-blocks in the same block of $y$. This is a weakly increasing chain. Next we check that this is the unique weakly increasing chain of the interval. We show that any other chain must either violate the merging order within a block of $y$ or among blocks of $y$. Suppose we merge two sub-blocks within a block of $y$ that are not the two smallest possible blocks to merge. Merging with a smaller sub-block later will result in a smaller edge label, in which case the maximal chain cannot be weakly increasing. Suppose we missed a merge within a smaller block of $y$. The edge obtained by completing that missed merge will again create a smaller label hence the maximal chain cannot be weakly increasing. 
\end{proof}

\bibliographystyle{plain}
\bibliography{bibliography.bib}

\end{document}